\newtheorem{theorem}{Theorem}
\newtheorem{lemma}[theorem]{Lemma}
\newtheorem{corollary}[theorem]{Corollary}
\newcommand{\R}{{\mathbb R}}
\newcommand{\C}{{\mathbb C}}
\newcommand{\inr}{\int_{\R}}
\newcommand{\inc}{\int_{\C}}
\newcommand{\F}{\mathcal{F}}
\newcommand{\Fa}{\F_\alpha}
\begin{document}

\title[Bargmann transform]{The Fourier and Hilbert transforms\\
Under the Bargmann transform}
\author[Dong and Zhu]{Xing-Tang Dong and Kehe Zhu}

\address{Department of Mathematics, Tianjin University, Tianjin 300354, China.}
\email{dongxingtang@163.com}

\address{Department of Mathematics, Shantou University, Shantou, Guangdong 515063, China, and
Department of Mathematics and Statistics, SUNY, Albany, NY 12222, USA.}
\email{kzhu@math.albany.edu}

\subjclass[2010]{Primary 30H20; Secondary 42A38; 44A15.}

\keywords{Bargmann transform, Fock space, fractional Fourier transform, fractional Hilbert transform,
wavelet transform.}

\thanks{\noindent Dong was supported in part by the National Natural Science Foundation of
China (Grant No. 11201331); Zhu was supported by the National Natural Science Foundation of China
(Grant No. 11371234) and the Project of International Science and Technology Cooperation Innovation
Platform in Universities in Guangdong Province (Grant No. 2014KGJHZ007).}

\begin{abstract}
There is a canonical unitary transformation from $L^2(\R)$ onto the Fock space $F^2$, called the Bargmann
transform. We study the action of the Bargmann transform on several classical integral operators on $L^2(\R)$,
including the fractional Fourier transform, the fractional Hilbert transform, and the wavelet transform.
\end{abstract}

\maketitle

\section{Introduction}

The Fock space $F^2$ is the Hilbert space of all entire functions $f$ on the complex plane $\C$ such that
$$\|f\|^2=\inc |f(z)|^2\,d\lambda(z)<\infty,$$
where 
$$d\lambda(z)=\frac{1}{\pi}e^{-|z|^2}\,dA(z)$$
is the Gaussian measure. Here $dA$ is ordinary area measure. The inner product on $F^2$ is inherited
from $L^2(\C,d\lambda)$. The Fock space is a convenient setting for many problems in functional analysis,
mathematical physics, and engineering. See \cite{Zhu1} for a recent survey of the mathematical theory of
Fock spaces.

Another Hilbert space we consider is $L^2(\R)=L^2(\R,dx)$. We will study the (fractional) Fourier transform,
the (fractional) Hilbert transform, and the wavelet transform as bounded linear operators
on $L^2(\R)$. The books \cite{F1, F2, G, OZK} are excellent sources of information for these operators.

The Bargmann transform $B$ is the operator from $L^2(\R)\rightarrow F^2$ defined by
$$Bf(z)=c\inr f(x)e^{2xz-x^2-(z^2/2)}\,dx,$$
where $c=(2/\pi)^{1/4}$. It is well known that $B$ is a unitary operator from $L^2(\R)$ onto $F^2$.
Furthermore, the inverse of $B$ is also an integral operator, namely,
$$B^{-1}f(x)=c\inc f(z)e^{2x\overline{z}-x^2-(\overline{z}^2/2)}\,d\lambda(z).$$
See \cite{F2, G, Zhu1}.

The Bargmann transform is an old tool in mathematical analysis and mathematical physics. See
\cite{B1,B2,BC,C,F2,G} and references there. In this article we study the action of the Bargmann transform
on several classical integral operators on $L^2(\R)$. The unitarily equivalent version of these operators on
$F^2$ sometimes takes amazingly simple form, sometimes reveals interesting properties, and sometimes
suggests natural new questions. It is our hope that this article will generate some new interest in this classical
area of mathematical analysis.

This paper was completed while the first author visited the Department of Mathematics and Statistics
at the State University of New York at Albany for the 2015-2016 academic year. He wishes to thank
SUNY-Albany for hosting his visit.

\section{Preliminaries}

The standard monomial orthonormal basis for $F^2$ is given by
$$e_n(z)=\sqrt{\frac{1}{n!}}\,z^n, \qquad n\geq0.$$
Thus the reproducing kernel of $F^2$ is
$$K(z,w)=\sum_{n=0}^{\infty}e_n(z)\overline{e_n(w)}=\sum_{n=0}^{\infty}\frac{(z\overline{w})^n}{n!}=e^{z\overline{w}}.$$
The identity
$$f(z)=\inc f(w)K(z,w)\,d\lambda(w)=\inc f(w)e^{z\overline w}\,d\lambda(w),\qquad f\in F^2,z\in\C,$$
is then called the reproducing formula for functions in the Fock space.

To exhibit an orthonormal basis for $L^2(\R)$, recall that for any $n\geq0$ the function
$$H_n(x)=(-1)^ne^{x^2}\frac{d^n}{dx^n}e^{-x^2}$$
is called the $n$th Hermite polynomial. It is well known that the functions
$$h_n(x)=\frac{c}{\sqrt{2^nn!}}e^{-x^2}H_n(\sqrt{2}x), \qquad n\geq0,$$
form an orthonormal basis for $L^2(\R)$, where $c=(2/\pi)^{1/4}$ again. See \cite{W, Zhu1}
for more information about the Hermite functions.

\begin{lemma}\label{1}
For every $n\geq0$ we have $Bh_n=e_n$.
\end{lemma}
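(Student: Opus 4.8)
The plan is to establish the identity for all $n$ at once by packaging the Hermite functions into a generating function, applying $B$, and matching Taylor coefficients. The weighting that makes everything collapse is $t^n/\sqrt{n!}$: since $e_n(z)=z^n/\sqrt{n!}$, the target generating function is
$$\sum_{n=0}^\infty e_n(z)\,\frac{t^n}{\sqrt{n!}}=\sum_{n=0}^\infty \frac{(tz)^n}{n!}=e^{tz},$$
so it suffices to show that $\sum_{n=0}^\infty (Bh_n)(z)\,t^n/\sqrt{n!}=e^{tz}$ and then read off the coefficient of $t^n$. Both sides will be entire in $t$ for each fixed $z$, so this coefficient comparison is legitimate.

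First I would compute the generating function of the Hermite functions with this weighting. Inserting the definition of $h_n$ and invoking the classical identity $\sum_{n}H_n(y)\,t^n/n!=e^{2yt-t^2}$ with $y=\sqrt2\,x$ and the variable $t/\sqrt2$, the powers of $2^n$ and one of the two factors of $\sqrt{n!}$ cancel, leaving the closed form
$$\sum_{n=0}^\infty \frac{t^n}{\sqrt{n!}}\,h_n(x)=c\,e^{-x^2+2xt-t^2/2}.$$

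Next I would apply $B$ to this function, interchanging the summation with the defining integral of $B$. This reduces $\sum_{n}(Bh_n)(z)\,t^n/\sqrt{n!}$ to a single Gaussian integral
$$c^2 e^{-t^2/2-z^2/2}\inr e^{-2x^2+2x(t+z)}\,dx.$$
Completing the square in $x$, using $\inr e^{-2x^2}\,dx=\sqrt{\pi/2}$, and recalling $c^2=\sqrt{2/\pi}$, the multiplicative constant becomes exactly $1$ and the exponent simplifies to $tz$; hence the whole expression equals $e^{tz}$. Comparing coefficients of $t^n$ then gives $Bh_n=e_n$.

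The main obstacle is analytic bookkeeping rather than any conceptual difficulty. I must justify interchanging the infinite sum with the integral, which follows from dominated convergence once the Hermite series is controlled against the Gaussian weight, and I must evaluate the Gaussian integral when $t+z$ is complex. The latter calls for a contour-shift argument, or an appeal to analyticity in the complex parameter, to reduce the shifted complex Gaussian to the real one; once that is in place the constants match precisely and the identity drops out. As an alternative I note that one could instead verify $Bh_0=e_0=1$ directly and check that $B$ intertwines the creation operator with multiplication by $z$, but the generating-function route seems the most economical.
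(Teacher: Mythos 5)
Your proof is correct, but it is genuinely different from what the paper does: the paper gives no argument at all, deferring to Theorem 6.8 of \cite{Zhu1}, whereas you supply a self-contained proof via the Hermite generating function (which is, in fact, essentially the standard argument found in such references). Your bookkeeping checks out: with the weight $t^n/\sqrt{n!}$ the factors $2^{n/2}$ and one $\sqrt{n!}$ cancel to give $\sum_n t^n h_n(x)/\sqrt{n!}=c\,e^{-x^2+2xt-t^2/2}$, completing the square yields the constant $c^2\sqrt{\pi/2}=1$ and the exponent $-t^2/2-z^2/2+(t+z)^2/2=tz$, and coefficient comparison in the entire function $e^{tz}=\sum_n e_n(z)\,t^n/\sqrt{n!}$ delivers $Bh_n=e_n$. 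The two analytic points you flag are both easily discharged with tools already in the paper: the complex-shifted Gaussian $\inr e^{-2(x-w)^2}\,dx=\sqrt{\pi/2}$ for $w\in\C$ is exactly Lemma~\ref{2} with $a=2$, $b=0$; and the sum--integral interchange follows from Fubini, since Cauchy--Schwarz with $\|h_n\|_2=1$ gives $\inr |h_n(x)|\,|e^{2xz-x^2-z^2/2}|\,dx\le C(z)$ uniformly in $n$ while $\sum_n |t|^n/\sqrt{n!}<\infty$. (An even slicker route for the interchange: the series $\sum_n t^n h_n/\sqrt{n!}$ converges in $L^2(\R)$ because its coefficients are square-summable, $B$ is bounded, and point evaluations are continuous on $F^2$, so $B$ passes through the sum automatically.) What your approach buys is a proof readable without consulting the reference, at the modest cost of these routine convergence justifications; your alternative sketch---verifying $Bh_0=e_0$ and checking that $B$ intertwines the creation operator $x-\frac{1}{2}\frac{d}{dx}$ (suitably normalized) with multiplication by $z$---would also work and is closer in spirit to an inductive proof, but the generating-function route is indeed the more economical of the two.
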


\begin{proof}
See Theorem 6.8 of \cite{Zhu1}.
\end{proof}

As a consequence of Lemma~\ref{1} we see that the Bargmann transform is a unitary operator from
$L^2(\R)$ onto $F^2$.

The following elementary result along with some of its close relatives will be used
many times later in the paper.

\begin{lemma}\label{2}
Let $a,b\in\R$ with $a>0$. Then we have
$$\inr e^{-(a+ib)(x+z)^2}\,dx=\frac{\sqrt{\pi}}{\sqrt{a+ib}}$$
for every complex number $z$.
\end{lemma}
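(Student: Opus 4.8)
The plan is to treat the two features of the integrand---the complex coefficient $a+ib$ and the complex shift $z$---separately, reducing each to the classical real Gaussian identity $\inr e^{-au^2}\,du=\sqrt{\pi/a}$ (valid for $a>0$) by an analyticity argument. Throughout, $\sqrt{a+ib}$ and more generally $\sqrt w$ denote the principal branch of the square root on the right half-plane $\{\operatorname{Re}w>0\}$, which is holomorphic there and positive on the positive real axis; the right-hand side of the asserted identity is understood with this branch.

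First I would record that the integral converges absolutely and defines an entire function of $z$. Writing $z=p+iq$, a short computation gives $\operatorname{Re}\bigl[(a+ib)(x+z)^2\bigr]=a(x+p)^2-2bq(x+p)-aq^2$, so that $\bigl|e^{-(a+ib)(x+z)^2}\bigr|$ decays like $e^{-ax^2}$, with a bound that is uniform when $z$ ranges over a compact set. This locally uniform domination justifies differentiation under the integral sign and shows that $G(z):=\inr e^{-(a+ib)(x+z)^2}\,dx$ is an entire function of $z$.

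Next I would settle the case $z=0$. Define $\phi(w)=\inr e^{-wu^2}\,du$ for $\operatorname{Re}w>0$. The same domination argument (now in the variable $w$) shows that $\phi$ is holomorphic on the right half-plane. On the positive real axis the classical Gaussian identity gives $\phi(w)=\sqrt{\pi}/\sqrt w$, and since both sides are holomorphic on the connected open set $\{\operatorname{Re}w>0\}$ and agree on a set with a limit point, the identity theorem forces $\phi(w)=\sqrt{\pi}/\sqrt w$ throughout. Taking $w=a+ib$ yields $G(0)=\sqrt{\pi}/\sqrt{a+ib}$.

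Finally I would remove the shift. For real $z$ the substitution $u=x+z$ together with translation invariance of Lebesgue measure gives $G(z)=G(0)$, so the entire function $G$ is constant on $\R$. An entire function that is constant on a set with a limit point is constant, whence $G(z)=G(0)=\sqrt{\pi}/\sqrt{a+ib}$ for every $z\in\C$, which is the claim. I expect the only genuine obstacle to be the bookkeeping in the two applications of the identity theorem---in particular keeping the principal branch of the square root consistent, and verifying the locally uniform domination that legitimizes holomorphy in each variable.
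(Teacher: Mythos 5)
Your proof is correct, but it reaches the conclusion by a genuinely different route than the paper at both key steps. To remove the shift $z$, the paper differentiates under the integral sign and observes that the integrand of $I'(z)$ is an exact derivative in $x$ whose antiderivative vanishes at $\pm\infty$, so $I'\equiv 0$; you instead use translation invariance of Lebesgue measure for real $z$ and then the identity theorem, which is equally rigorous and slightly cheaper, since you never need the vanishing boundary terms (though you do need the same locally uniform domination, and your computation of $\operatorname{Re}\bigl[(a+ib)(x+z)^2\bigr]$ supplying it is right). The more substantial divergence is at $z=0$: the paper evaluates $\inr e^{-(a+ib)x^2}\,dx$ by an explicit contour rotation --- writing $a+ib=\sqrt{a^2+b^2}\,e^{i\gamma}$ with $\gamma\in(-\pi/2,\pi/2)$, applying Cauchy's theorem on a sector of half-angle $\gamma/2$, and killing the arc contribution via the bound $|\gamma|Re^{-R^2\cos\gamma}/2\to0$ --- whereas you continue analytically in the coefficient: $\phi(w)=\inr e^{-wu^2}\,du$ is holomorphic on $\{\operatorname{Re}w>0\}$ thanks to the uniform bound $|e^{-wu^2}|\le e^{-\delta u^2}$ on $\{\operatorname{Re}w\ge\delta\}$, agrees with $\sqrt{\pi}/\sqrt{w}$ on $(0,\infty)$, and the identity theorem finishes. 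Your version is softer and shorter, avoiding contour estimates entirely at the price of two applications of the identity theorem plus the branch bookkeeping you flag (which is handled correctly: the principal branch is holomorphic on the right half-plane and positive on $(0,\infty)$, matching the paper's normalization of $\sqrt{a+ib}$). The paper's contour argument, by contrast, is a self-contained computation in which the correct branch emerges from the deformation itself rather than from a continuation argument; either is a complete proof of the lemma.
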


\begin{proof}
Write $$I(z)=\inr e^{-(a+ib)(x+z)^2}\,dx,\qquad z\in\C.$$
It is clear that $I(z)$ is an entire function. Moreover, if we write $z=s+it$ for real $s$ and $t$, then
$$\left|e^{-(a+ib)(x+z)^2}\right|=\exp\left(-ax^2\cdot\frac{(x+s)^2-t^2-2tb(x+s)/a}{x^2}\right)\rightarrow0$$
as $x\rightarrow\pm\infty$. Differentiating under the integral sign, we obtain
$$I'(z)=-2(a+ib)\inr (x+z)e^{-(a+ib)(x+z)^2}dx=e^{-(a+ib)(x+z)^2}\bigg|_{-\infty}^{+\infty}=0$$
for any $z\in\C$. It follows that
$$I(z)=I(0)=\inr e^{-(a+ib)x^2}dx=2\int_0^{+\infty} e^{-(a+ib)x^2}dx.$$

Since $a>0$, we can write
$$a+ib=\sqrt{a^2+b^2}\,e^{i\gamma},\qquad \gamma\in\left(-\frac{\pi}{2}, \frac{\pi}{2}\right).$$
If $\gamma=0$, then $b=0$ and the desired result follows immediately. So we assume $\gamma\neq0$.

For large positive $R$ we consider the sector
$$D_R=\left\{z=re^{i\theta}:0< r<R, \min(0,\gamma/2)<\theta<\max(0,\gamma/2)\right\}$$
in the right half-plane. The domain $D_R$ has boundary consisting of three curves: the interval $\overrightarrow{OA}$
on the real axis, the line segment $\overrightarrow{BO}$ on the ray $\theta=\gamma/2$, and the arc $\widehat{AB}$
on the circle $|z|=R$. Since the function $e^{-z^2}$ is analytic on $D_R$, it follows form Cauchy's theorem that
$$\int_{\overrightarrow{OB}}{e^{-z^2}dz}=\int_{\overrightarrow{OA}}{e^{-z^2}dz}+\int_{\widehat{AB}}{e^{-z^2}dz}.$$

Since $\gamma\in(-\pi/2, \pi/2)$, we have
$$\left|\int_{\widehat{AB}}{e^{-z^2}dz}\right|\leq \max_{z\in \widehat{AB}}\left|e^{-z^2}\right|\cdot\frac{|\gamma| R}{2}
=\frac{|\gamma| R e^{-R^2cos\gamma}}{2}\rightarrow 0$$
as $R\rightarrow+\infty$. Therefore,
\begin{align*}
    \int_0^{+\infty} e^{-(a+ib)x^2}dx &=\lim_{R\rightarrow+\infty}\int_0^{R/\sqrt[4]{a^2+b^2}} e^{-(a+ib)x^2}dx\\
    &=\lim_{R\rightarrow+\infty}\frac{1}{\sqrt{a+ib}}\int_{\overrightarrow{OB}}{e^{-z^2}dz}\\
    &=\lim_{R\rightarrow+\infty}\frac{1}{\sqrt{a+ib}}\int_{\overrightarrow{OA}}{e^{-z^2}dz}\\
    &=\frac{1}{\sqrt{a+ib}}\int_0^{+\infty} e^{-x^2}dx=\frac{\sqrt{\pi}}{2\sqrt{a+ib}}.
\end{align*}
This proves the desired result.
\end{proof}

\section{The Fourier transform}

There are several normalizations for the Fourier transform. We define the
Fourier transform by $$\mathcal{F}(f)(x)=\frac{1}{\sqrt{\pi}}\inr e^{-2ixt}f(t)\,dt.$$
It is well known that the Fourier transform is a unitary operator
on $L^2(\R)$, and its inverse is given by
$$\mathcal{F}^{-1}(f)(x)=\frac{1}{\sqrt{\pi}}\inr e^{2ixt}f(t)\,dt.$$

More generally, the notion of fractional Fourier transforms in the form of fractional powers of the
Fourier transform was introduced as early as 1929 (see \cite{W}), and it has become one of the most valuable
and powerful tools in mathematics, quantum mechanics, optics, and signal processing. Thus for any real angle
$\alpha$ we define the $\alpha$-angle fractional Fourier transform by
$$\Fa(f)(x)=\frac{\sqrt{1-i\cot\alpha}}{\sqrt{\pi}}e^{ix^2\cot\alpha}
\inr e^{-2i(xt\csc\alpha-\frac{\cot\alpha}{2}t^2)}f(t)\,dt,$$
where the square root $\sqrt{1-i\cot\alpha}$ is defined such that
$$\arg\sqrt{1-i\cot\alpha}\in(-\pi/2,\pi/2].$$
Obviously, the integral representation above is well defined if $\sin\alpha\neq0$.
We define $\Fa(f)(x)=f(x)$ if $\alpha=0$ and $\Fa(f)(x)=f(-x)$ if $\alpha=\pm\pi$.
This is consistent with the integral representation above in the sense that
$$\lim_{\varepsilon\rightarrow0}\mathcal{F}_{\alpha+\varepsilon}=\Fa$$
for these special values.

Since the trigonometric functions $\csc$ and $\cot$ are periodic with period $2\pi$, it suffices for us to
consider the case $\alpha\in[-\pi,\pi]$. Clearly, when $\alpha=\frac{\pi}{2}$ and $\alpha=-\frac{\pi}{2}$,
the $\alpha$-angle fractional Fourier transform becomes the usual Fourier transform and the inverse Fourier
transform, respectively. See \cite{BM, OZK} for more information about the fractional Fourier transforms.

It is not at all clear from the definition that $\Fa$ is bounded and invertible on $L^2(\R)$. There are also issues
concerning convergence: it is not clear that the integral defining $\Fa(f)$ converges in $L^2(\R)$ for arbitrary
$f\in L^2(\R)$. The situation will change dramatically once we translate $\Fa$ to an operator on the Fock space.
In other words, we will show that, under the Bargmann transform, the operator $\Fa: L^2(\R)\rightarrow L^2(\R)$
is unitarily equivalent to an extremely simple operator on the Fock space $F^2$.

\begin{theorem}\label{3}
The operator
$$T=B\Fa B^{-1}:F^2\rightarrow F^2$$
is given by $Tf(z)=f(e^{-i\alpha}z)$ for all $f\in F^2$.
\end{theorem}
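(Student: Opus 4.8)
The plan is to exploit the fact, guaranteed by Lemma~\ref{1}, that $B$ carries the Hermite basis $\{h_n\}$ of $L^2(\R)$ onto the monomial basis $\{e_n\}$ of $F^2$. Since $B$ and $B^{-1}$ are unitary, and since the candidate operator $Cf(z)=f(e^{-i\alpha}z)$ is unitary on $F^2$ (the Gaussian measure $d\lambda$ is rotation invariant and $|e^{-i\alpha}|=1$), it suffices to check that $T$ and $C$ agree on the orthonormal basis $\{e_n\}$. Because $Ce_n(z)=(e^{-i\alpha}z)^n/\sqrt{n!}=e^{-in\alpha}e_n(z)$ while $Te_n=B\Fa B^{-1}e_n=B\Fa h_n$, the whole theorem reduces to the single eigenvalue relation
$$\Fa h_n=e^{-in\alpha}h_n,\qquad n\geq0,$$
after which $Te_n=e^{-in\alpha}Bh_n=e^{-in\alpha}e_n=Ce_n$.

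To establish this eigenvalue relation I would package all the $h_n$ at once through their generating function. Starting from the classical identity $\sum_n H_n(y)s^n/n!=e^{2ys-s^2}$ and the definition of $h_n$, one obtains
$$\sum_{n=0}^\infty \frac{h_n(x)}{\sqrt{n!}}\,t^n=c\,e^{-x^2+2xt-t^2/2},$$
whose right-hand side is precisely the Bargmann kernel evaluated at $(x,t)$. I would then apply $\Fa$ to the Schwartz function $g_t(x)=e^{-x^2+2xt-t^2/2}$ directly through its integral definition. Substituting $g_t$ into the formula for $\Fa$, the coefficient of the squared integration variable in the exponent becomes $-(1-i\cot\alpha)$, so the integral is a shifted complex Gaussian to which Lemma~\ref{2} applies with $a=1>0$ and $b=-\cot\alpha$. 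The prefactor $\sqrt{1-i\cot\alpha}/\sqrt{\pi}$ cancels exactly against the $\sqrt{\pi}/\sqrt{1-i\cot\alpha}$ produced by Lemma~\ref{2}, leaving only the exponential coming from completing the square.

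The decisive computation is the simplification of that remaining exponent, and it is where I expect the bookkeeping to be heaviest. The key algebraic fact is
$$1-i\cot\alpha=\frac{-ie^{i\alpha}}{\sin\alpha},\qquad\text{so}\qquad \frac{1}{1-i\cot\alpha}=i\,e^{-i\alpha}\sin\alpha.$$
Using this to reduce $ix^2\cot\alpha+\dfrac{(t-ix\csc\alpha)^2}{1-i\cot\alpha}-\dfrac{t^2}{2}$, the $x^2$-terms collapse to $-x^2$, the cross term to $2xt\,e^{-i\alpha}$, and the $t^2$-terms to $-\tfrac12 t^2e^{-2i\alpha}$, so that $\Fa g_t(x)=e^{-x^2+2xte^{-i\alpha}-t^2e^{-2i\alpha}/2}=g_{e^{-i\alpha}t}(x)$. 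In other words, $\Fa$ merely replaces the parameter $t$ by $e^{-i\alpha}t$ in the generating function.

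Finally I would extract the eigenvalue relation by Cauchy's coefficient formula. Writing $h_n/\sqrt{n!}$ as the $n$-th Taylor coefficient of $c\,g_t$ in $t$, namely $\frac{c}{2\pi i}\oint_{|t|=1}g_t\,t^{-n-1}\,dt$, and using Fubini to interchange this contour integral with the absolutely convergent integral defining $\Fa$ (the integrand is jointly integrable over the real line and the circle $|t|=1$), I obtain $\Fa h_n/\sqrt{n!}=\frac{c}{2\pi i}\oint_{|t|=1}\Fa g_t\,t^{-n-1}\,dt=\frac{c}{2\pi i}\oint_{|t|=1}g_{e^{-i\alpha}t}\,t^{-n-1}\,dt$. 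Since $c\,g_{e^{-i\alpha}t}=\sum_m e^{-im\alpha}(t^m/\sqrt{m!})h_m$, this last integral picks out $e^{-in\alpha}h_n/\sqrt{n!}$, whence $\Fa h_n=e^{-in\alpha}h_n$ for every $n$, completing the reduction begun in the first paragraph. The main obstacle is thus twofold: carrying out the Gaussian integral and exponent simplification cleanly (the identity for $1-i\cot\alpha$ is exactly what makes it collapse), and supplying the Fubini justification that lets $\Fa$ pass through the coefficient-extracting contour integral without assuming in advance that $\Fa$ is bounded on all of $L^2(\R)$.
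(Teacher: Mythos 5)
Your proposal is correct, but it inverts the paper's logical order and so constitutes a genuinely different route. The paper computes $B\Fa B^{-1}$ head-on: it applies the integral formula for $\Fa$ to $B^{-1}f$ for a polynomial $f$, evaluates the resulting complex Gaussian integral with Lemma~\ref{2}, obtaining equation \eqref{eq1}, then applies $B$, uses Lemma~\ref{2} once more, and recognizes the output $\inc f(w)e^{e^{-i\alpha}z\overline{w}}\,d\lambda(w)$ as $f(e^{-i\alpha}z)$ via the reproducing formula; the eigenvalue relation $\Fa h_n=e^{-in\alpha}h_n$ is then read off \emph{afterwards} (it is the paper's Corollary~\ref{5}). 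You instead prove that eigenvalue relation first, via the Hermite generating function and Cauchy coefficient extraction, and deduce the theorem from Lemma~\ref{1} together with the unitarity of $f(z)\mapsto f(e^{-i\alpha}z)$. The analytic core is nevertheless the same: your computation $\Fa g_t=g_{e^{-i\alpha}t}$ is essentially the paper's \eqref{eq1} specialized to the Bargmann kernel (for $f=K(\cdot,w)$, the function $B^{-1}f$ is precisely your $g_{\overline{w}}$ up to the constant $c$), both invocations of Lemma~\ref{2} use $a=1$, $b=-\cot\alpha$, and your identity $1-i\cot\alpha=-ie^{i\alpha}/\sin\alpha$ with the resulting collapse of the exponent checks out, as does the Fubini justification for passing $\Fa$ through the contour integral. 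What your route buys is economy: one Gaussian integral instead of two, no Fubini over $\C\times\R$, and the spectral picture of Corollary~\ref{5} for free rather than as a consequence; what the paper's route buys is the explicit kernel calculus, since \eqref{eq1} is reused verbatim in the proofs of Theorem~\ref{7} and Lemma~\ref{10}, which your generating-function shortcut would not supply. Two small points to state more carefully: first, agreement of $T$ with $C$ on the basis gives $T=C$ only on the dense span, and since the boundedness of $\Fa$ on $L^2(\R)$ is exactly what is not known in advance (the paper flags this just before the theorem), the honest conclusion---for your write-up and the paper's alike, which verifies the identity only for polynomials---is that $\Fa$ agrees with the unitary $B^{-1}CB$ on a dense subspace and is thereby extended to all of $L^2(\R)$; second, like the paper you implicitly assume $\sin\alpha\neq0$, and the degenerate cases $\alpha=0,\pm\pi$ should be noted separately, where they follow at once from the definitions and the identity $B[f(-\,\cdot\,)](z)=Bf(-z)$.
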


\begin{proof}
For the purpose of applying Fubini's theorem in the calculations below, we assume that $f$ is any polynomial.
Recall that the polynomials are dense in $F^2$, and under the inverse Bargmann transform, they become the
Hermite polynomials times the Gauss function, which have very good integrability properties on the real line.
We still write $c=(2/\pi)^{1/4}$ and also
$$c'=\frac{\sqrt{1-i\cot\alpha}}{\sqrt{\pi}}.$$
It follows from Fubini's theorem that
\begin{align*}
    &\Fa(B^{-1}f)(x)\\
    &=cc'e^{ix^2\cot\alpha}\inr e^{-2i(xt\csc\alpha-\frac{\cot\alpha}{2}t^2)}\,dt
    \inc f(z)e^{2t\overline{z}-t^2-\frac{\overline{z}^2}{2}}\,d\lambda(z)\\
    &=cc'e^{ix^2\cot\alpha}\inc f(z)e^{-\frac{\overline{z}^2}{2}}\,d\lambda(z)
    \inr e^{2t(\overline{z}-ix\csc\alpha)-(1-i\cot\alpha)t^2\,}dt\\
    &=cc'e^{ix^2\cot\alpha}\inc f(z)e^{-\frac{\overline{z}^2}{2}+\frac{(\overline{z}
    -ix\csc\alpha)^2}{1-i\cot\alpha}}\,d\lambda(z)
    \inr e^{-(1-i\cot\alpha)(t-\frac{\overline{z}-ix\csc\alpha}{1-i\cot\alpha})^2}\,dt.
\end{align*}
By Lemma~\ref{2}, we have
\begin{align*}
    \Fa(B^{-1}f)(x)&=ce^{ix^2\cot\alpha}\inc f(z)e^{-\frac{\overline{z}^2}{2}+\frac{(\overline{z}
    -ix\csc\alpha)^2}{1-i\cot\alpha}}\,d\lambda(z)\nonumber\\
    &=ce^{(i\cot\alpha-\frac{\csc^2\alpha}{1-i\cot\alpha})x^2}
    \inc f(w)e^{\frac{1+i\cot\alpha}{2(1-i\cot\alpha)}\overline{w}^2
    -\frac{2i\csc\alpha}{1-i\cot\alpha}x\overline{w}}\,d\lambda(w).
\end{align*}
After simplification, the expression above becomes
\begin{equation}\label{eq1}
    \Fa(B^{-1}f)(x)=ce^{-x^2}\inc f(w)e^{-e^{-2i\alpha}\overline{w}^2/2+2e^{-i\alpha}x\overline{w}}\,d\lambda(w).
\end{equation}
Therefore,
\begin{align*}
    &B\Fa B^{-1}f(z)\\
    &=c^2\inr e^{2xz-\frac{z^2}{2}-2x^2}\,dx
    \inc f(w)e^{-e^{-2i\alpha}\overline{w}^2/2+2e^{-i\alpha}x\overline{w}}\,d\lambda(w)\\
    &=c^2e^{-\frac{z^2}{2}}\inc f(w)e^{-e^{-2i\alpha}\overline{w}^2/2}\,d\lambda(w)
    \inr e^{2x(z+e^{-i\alpha}\overline{w})-2x^2}\,dx\\
    &=c^2e^{-\frac{z^2}{2}}\inc f(w)e^{-\frac{e^{-2i\alpha}\overline{w}^2}{2}+
    \frac{(z+e^{-i\alpha}\overline{w})^2}{2}}\,d\lambda(w)\inr e^{-2\left(x-\frac{z+e^{-i\alpha}\overline{w}}{2}\right)^2}\,dx.
\end{align*}
It follows from Lemma~\ref{2} again that
$$\inr e^{-2\left(x-\frac{z+e^{-i\alpha}\overline{w}}{2}\right)^2}\,dx
=\sqrt{\frac{\pi}2}=\frac{1}{c^2}.$$
Therefore, we have
$$B\Fa B^{-1}f(z)=\inc f(w)e^{e^{-i\alpha}z\overline{w}}\,d\lambda(w).$$
This together with the reproducing formula for functions in $F^2$ gives
$$B\Fa B^{-1}f(z)=f(e^{-i\alpha}z),$$
which completes the proof of the theorem.
\end{proof}

Since the classical Fourier transform $\F$ is just $\Fa$ with $\alpha=\frac{\pi}{2}$, we see that the Fourier
transform on $L^2(\R)$ is unitarily equivalent to the operator $f(z)\rightarrow f(-iz)$ on $F^2$. This is
well known to experts in the field and can be found in \cite{B1, B2}.

As a consequence of Theorem~\ref{3}, we immediately derive a number of basic properties for the fractional
Fourier transform $\Fa$. In particular, we obtain an alternative proof of the fractional Fourier inversion formula
and the associated Plancherel's formula.

\begin{corollary}\label{4}
The $\alpha$-angle fractional Fourier transform $\Fa$ is a unitary operator on $L^2(\R)$, so that
$$\inr|\Fa(f)|^2\,dx=\inr|f|^2\,dx$$
for all $f\in L^2(\R)$. Furthermore, $\left(\Fa\right)^{-1}=\mathcal{F}_{-\alpha}$.
\end{corollary}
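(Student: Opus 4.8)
The plan is to transport everything to the Fock space via Theorem~\ref{3} and exploit the fact that the resulting operator is transparently unitary. Write $T_\alpha f(z)=f(e^{-i\alpha}z)$ for the operator on $F^2$ produced by Theorem~\ref{3}, so that $B\Fa B^{-1}=T_\alpha$ whenever $\sin\alpha\neq0$. Since $B$ is a unitary operator from $L^2(\R)$ onto $F^2$ (Lemma~\ref{1}), the operator $\Fa=B^{-1}T_\alpha B$ will be unitary on $L^2(\R)$ precisely when $T_\alpha$ is unitary on $F^2$, and the Plancherel identity $\inr|\Fa f|^2\,dx=\inr|f|^2\,dx$ will then be nothing more than the statement that a unitary operator preserves norms. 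Thus the entire corollary reduces to two purely Fock-space facts: that $T_\alpha$ is unitary, and that $T_\alpha^{-1}=T_{-\alpha}$.

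To see that $T_\alpha$ is unitary, I would test it against the orthonormal basis $e_n(z)=z^n/\sqrt{n!}$ of $F^2$. A direct computation gives
$$T_\alpha e_n(z)=\frac{(e^{-i\alpha}z)^n}{\sqrt{n!}}=e^{-in\alpha}e_n(z),$$
so $T_\alpha$ is diagonal with respect to $\{e_n\}$ with eigenvalues $e^{-in\alpha}$, all of modulus one. Consequently $T_\alpha$ maps an orthonormal basis to an orthonormal basis, which shows at once that it is a well-defined bounded operator on $F^2$ and in fact unitary. This also disposes of any lingering convergence worries in the definition of $\Fa$: the transported operator is manifestly well defined on all of $F^2$, hence $\Fa$ is well defined on all of $L^2(\R)$.

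For the inverse, the same computation applied to the angle $-\alpha$ gives $T_{-\alpha}e_n=e^{in\alpha}e_n$, which is exactly the action of the adjoint (equivalently, since $T_\alpha$ is unitary, the inverse) of $T_\alpha$ on each basis vector. Hence $T_{-\alpha}=T_\alpha^{-1}$ on $F^2$, and conjugating by $B$ yields $\mathcal{F}_{-\alpha}=\left(\Fa\right)^{-1}$. Finally I would handle the degenerate angles separately: for $\alpha=0$ the map $\Fa$ is the identity and for $\alpha=\pm\pi$ it is the parity operator $f(x)\mapsto f(-x)$, both evidently unitary with the claimed inverses, consistent with $T_0=I$ and $T_{\pm\pi}f(z)=f(-z)$. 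The only genuine subtlety, and thus the main point to verify, is that the proof of Theorem~\ref{3} was carried out only for polynomials $f$; but since $T_\alpha$ is bounded and the polynomials are dense in $F^2$, the identity $B\Fa B^{-1}=T_\alpha$ extends by continuity to all of $F^2$, so nothing is lost.
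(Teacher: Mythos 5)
Your proposal is correct and follows exactly the paper's route: the paper's proof of Corollary~\ref{4} consists of the single remark that the claims are ``obvious from the unitarily equivalent form of $\Fa$ on the Fock space,'' i.e.\ precisely your reduction via Theorem~\ref{3} to the manifestly unitary rotation operator $f(z)\mapsto f(e^{-i\alpha}z)$ with inverse $f(z)\mapsto f(e^{i\alpha}z)$. You simply spell out the details the paper leaves implicit (diagonal action on the basis $\{e_n\}$, the degenerate angles $\alpha=0,\pm\pi$, and the density argument extending Theorem~\ref{3} from polynomials), all of which are sound.
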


\begin{proof}
This is obvious from the unitarily equivalent form of $\Fa$ on the Fock space.
\end{proof}

The following result is also clear from our new representation of the fractional Fourier
transform on the Fock space, because an entire function uniquely determines
its Taylor coefficients.

\begin{corollary}\label{5}
For each $n\geq0$ the Hermite function $h_n$ is an eigenvector of the fractional
Fourier transform $\Fa$ and the corresponding eigenvalue is $e^{-in\alpha}$.
\end{corollary}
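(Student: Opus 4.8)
The plan is to read off the eigenvalue relation directly from the unitary equivalence established in Theorem~\ref{3}, exploiting the fact that the monomials $e_n$ are homogeneous and hence eigenvectors of the dilation operator $T$.

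First I would record how $T=B\Fa B^{-1}$ acts on the orthonormal basis $\{e_n\}$. Since $e_n(z)=z^n/\sqrt{n!}$ is homogeneous of degree $n$, the formula $Tf(z)=f(e^{-i\alpha}z)$ from Theorem~\ref{3} gives
$$Te_n(z)=e_n(e^{-i\alpha}z)=\frac{(e^{-i\alpha}z)^n}{\sqrt{n!}}=e^{-in\alpha}\,\frac{z^n}{\sqrt{n!}}=e^{-in\alpha}e_n(z),$$
so that $Te_n=e^{-in\alpha}e_n$ for every $n\geq0$. This is exactly the statement that $T$ is diagonal in the basis $\{e_n\}$, multiplying the $n$th Taylor coefficient of any $f\in F^2$ by $e^{-in\alpha}$; in particular each $e_n$ is an eigenvector of $T$ with eigenvalue $e^{-in\alpha}$.

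Next I would transport this relation back to $L^2(\R)$. Rewriting the identity of Theorem~\ref{3} as $\Fa=B^{-1}TB$ and invoking Lemma~\ref{1}, which asserts $Bh_n=e_n$, I would compute
$$\Fa h_n=B^{-1}TBh_n=B^{-1}Te_n=B^{-1}\!\left(e^{-in\alpha}e_n\right)=e^{-in\alpha}B^{-1}e_n=e^{-in\alpha}h_n,$$
where the last step again uses Lemma~\ref{1} in the form $B^{-1}e_n=h_n$, and the linearity of $B^{-1}$ is guaranteed by the unitarity of the Bargmann transform noted after Lemma~\ref{1}. This is precisely the claimed eigenvalue relation.

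I do not anticipate any genuine obstacle: the entire argument is a one-line consequence of the intertwining relation once one observes that monomials are eigenvectors of the dilation operator. The only point deserving a moment's care is the bookkeeping of the conjugation $\Fa=B^{-1}TB$ together with the bijection $h_n\leftrightarrow e_n$ of Lemma~\ref{1}, so that the eigenvectors of $T$ on $F^2$ correspond exactly to the Hermite functions on $L^2(\R)$ and the eigenvalues are preserved.
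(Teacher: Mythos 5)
Your proposal is correct and follows exactly the route the paper intends: the paper's corollary is stated as an immediate consequence of Theorem~\ref{3} (since $Te_n(z)=e_n(e^{-i\alpha}z)=e^{-in\alpha}e_n(z)$) combined with Lemma~\ref{1} ($Bh_n=e_n$), which is precisely your computation $\Fa h_n=B^{-1}TBh_n=e^{-in\alpha}h_n$. You have merely written out the details the authors left implicit, so there is nothing to add.
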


The corollary above actually gives the complete spectral picture for the unitary operator
$\Fa$ on $L^2(\R)$. More specifically, if $\alpha$ is a rational multiple of $\pi$, then the
spectrum of $\Fa:L^2(\R)\to L^2(\R)$ is given by
$$\sigma(\Fa)=\left\{e^{-in\alpha}:n=0,1,2,\cdots\right\},$$
which consists of only finitely many points on the unit circle. If $\alpha$ is an irrational
multiple of $\pi$, then $\sigma(\Fa)$ is the entire unit circle. In particular, if we specialize
to the case $\alpha=\pi/2$, we obtain the following spectral decomposition for the classical
Fourier transform $\F$.

\begin{corollary}
For each $0\leq k\leq3$ let $X_k$ denote the closed subspace of $L^2(\R)$ spanned by the Hermite functions
$h_{k+4m}$, $m\geq0$, and let $P_k:L^2(\R)\rightarrow X_k$ be the orthogonal projection. Then
$$L^2(\R)=\bigoplus_{k=0}^3X_k,$$
and the corresponding spectral decomposition for the unitary operator $\F:L^2(\R)\rightarrow L^2(\R)$
is given by $\F=P_0-iP_1-P_2+iP_3$. In particular, the fixed points of the Fourier transform $\F$ are exactly
functions of the form
$$f(x)=\sum_{n=0}^{\infty}c_nh_{4n}(x), \qquad \{c_n\}\in l^2.$$
\end{corollary}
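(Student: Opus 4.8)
The plan is to read off the entire statement directly from Corollary~\ref{5} by specializing to $\alpha=\pi/2$. First I would note that $\F=\F_{\pi/2}$, so Corollary~\ref{5} gives $\F h_n=e^{-in\pi/2}h_n$ for every $n\geq0$. The key observation is that the eigenvalue $e^{-in\pi/2}$ depends only on the residue of $n$ modulo $4$: it equals $1,-i,-1,i$ according as $n\equiv0,1,2,3\pmod 4$. Thus the Hermite functions $h_{k+4m}$, $m\geq0$, that span $X_k$ are precisely the eigenvectors of $\F$ with common eigenvalue $e^{-ik\pi/2}$, and $X_k$ is the corresponding eigenspace.

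Next I would invoke the fact that $\{h_n\}_{n\geq0}$ is an orthonormal basis for $L^2(\R)$. Partitioning the index set according to residue modulo $4$ then yields the orthogonal decomposition $L^2(\R)=\bigoplus_{k=0}^3 X_k$, with $P_k$ the orthogonal projection retaining only those Hermite components whose index is congruent to $k$. For any $f\in L^2(\R)$ we may write $f=\sum_{k=0}^3 P_kf$ as a sum of four mutually orthogonal terms, and since $\F$ is bounded and acts as the scalar $e^{-ik\pi/2}$ on all of $X_k$ (by continuity, extending from the basis vectors $h_{k+4m}$), we obtain $\F f=\sum_{k=0}^3 e^{-ik\pi/2}P_kf=P_0f-iP_1f-P_2f+iP_3f$. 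This is exactly the asserted identity $\F=P_0-iP_1-P_2+iP_3$.

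Finally, to identify the fixed points I would solve $\F f=f$. Comparing the two expansions $\F f=P_0f-iP_1f-P_2f+iP_3f$ and $f=P_0f+P_1f+P_2f+P_3f$, the mutual orthogonality of the $X_k$ forces $P_1f=P_2f=P_3f=0$, whence $f\in X_0$, i.e. $f=\sum_{n\geq0}c_nh_{4n}$ with $\{c_n\}\in l^2$. I do not expect any genuine obstacle here: every assertion is an immediate bookkeeping consequence of the eigenvalue formula in Corollary~\ref{5} together with the orthonormal basis property of the Hermite functions. The only point deserving a moment's care is that the four numbers $1,-i,-1,i$ are distinct fourth roots of unity, which guarantees that $X_0,\dots,X_3$ are genuinely distinct eigenspaces exhausting $L^2(\R)$.
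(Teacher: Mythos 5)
Your proposal is correct and matches the paper's own route: the paper presents this corollary as an immediate specialization of Corollary~\ref{5} to $\alpha=\pi/2$, reading the eigenvalues $e^{-in\pi/2}=(-i)^n$ off modulo $4$ and using the orthonormal Hermite basis exactly as you do. Your extra remarks (extending by boundedness from the basis vectors, and that the four fourth roots of unity are distinct so the fixed-point set is precisely $X_0$) are the right bookkeeping and introduce nothing beyond what the paper's argument tacitly uses.
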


Each $X_k$ is nothing but the eigenspace of the Fourier transform $\F$ corresponding to the eigenvalue $(-i)^k$.
If we write $\alpha=a \pi/2$, then the eigenvalues of $\Fa$ and $\F$ for eigenfunctions $h_n$ have the following
relation:
$$e^{-in\alpha}=\left(e^{-in\frac{\pi}{2}}\right)^a, \qquad n\geq0.$$
Since the functions $h_n(x)$ form an orthonormal basis for $L^2(\R)$, it is then clear that we can naturally think
of the fractional Fourier transform $\Fa$ as a fractional power of the Fourier transform $\F$.

\section{The Hilbert transform}

The Hilbert transform is the singular integral operator defined by
$$Hf(x)=\frac{1}{\pi}\inr \frac{f(t)dt}{x-t},$$
where the improper integral is taken in the sense of principal value. It is well known that the Fourier transform of
$Hf(x)$ is $-i\; \textrm{sgn} (x)\mathcal{F}(f)(x)$. Thus $H$ is a unitary operator on $L^2(\R)$ and
$$Hf(x)=\mathcal{F}^{-1}\left[-i\; \textrm{sgn} (x)\mathcal{F}(f)(x)\right].$$
Observe that
$$-i\; \textrm{sgn} (x)=e^{-i\pi/2}h(x)+e^{i\pi/2}h(-x),$$
where $h(x)$ is the Heaviside step function: $h(x)=1$ for $x\geq0$ and $h(x)=0$ for $x<0$.
So it is natural to define fractional Hilbert transforms as follows.
$$H^\alpha_{\phi}f(x)=\Fa^{-1}\left[\left(e^{-i\phi}h(x)+e^{i\phi}h(-x)\right)\Fa(f)(x)\right],$$
where $\alpha$ and $\phi$ are real parameters. For $\phi=\pi/2$ and $\alpha=\pi/2$ we recover the classical
Hilbert transform.

Since each fractional Fourier transform $\Fa$ is a unitary operator on $L^2(\R)$, the fractional Hilbert transform
$H^\alpha_\phi$ is also a unitary operator on $L^2(\R)$. The operators $H^\alpha_\phi$ play an important role
in optics and signal processing. See \cite{BM, LMZ} for more information about fractional Hilbert transforms.

In order to identify the operator on the Fock space that corresponds to the fractional Hilbert transform
$H^\alpha_\phi: L^2(\R)\to L^2(\R)$, we need the entire function
\begin{align*}
A_\phi(z)&=\frac{\sqrt{\pi}}{2}\left(e^{-i\phi}+e^{i\phi}\right)+\int_0^z \left(e^{-i\phi}-e^{i\phi}\right)e^{-u^2}\,du\\
&=\sqrt\pi\cos\phi-2i\sin\phi\int_0^ze^{-u^2}\,du.
\end{align*}

\begin{theorem}\label{7}
The operator
$$T=BH^\alpha_{\phi}  B^{-1}:F^2\rightarrow F^2$$
is given by
$$
Tf(z)=\frac{1}{\sqrt{\pi}}\inc f(w)e^{z\overline{w}}A_\phi\left(\frac{e^{i\alpha}z
+e^{-i\alpha}\overline{w}}{\sqrt{2}}\right)\,d\lambda(w)$$
for all $f\in F^2$.
\end{theorem}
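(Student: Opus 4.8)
The plan is to exploit the factorization of $H^\alpha_\phi$ through a multiplication operator together with the already-computed action of the Bargmann transform on $\Fa$. Writing $g(x)=e^{-i\phi}h(x)+e^{i\phi}h(-x)$ and letting $M_g$ denote pointwise multiplication by $g$, the definition reads $H^\alpha_\phi=\Fa^{-1}M_g\Fa$. Inserting $B^{-1}B=I$ twice gives
$$T=BH^\alpha_\phi B^{-1}=\left(B\Fa^{-1}B^{-1}\right)\left(BM_gB^{-1}\right)\left(B\Fa B^{-1}\right).$$
By Theorem~\ref{3}, the outer factor on the right sends $f(z)$ to $f(e^{-i\alpha}z)$, and by Corollary~\ref{4} together with Theorem~\ref{3} the outer factor on the left sends $f(z)$ to $f(e^{i\alpha}z)$. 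Thus everything reduces to identifying the middle operator $BM_gB^{-1}$ and then conjugating it by these two rotations.

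To compute $BM_gB^{-1}$ I would proceed exactly as in the proof of Theorem~\ref{3}: take $f$ to be a polynomial so that Fubini applies, substitute the integral formulas for $B^{-1}$ and $B$, and interchange the order of integration to arrive at
$$BM_gB^{-1}f(z)=c^2\inc f(w)e^{-z^2/2-\overline w^2/2}\,d\lambda(w)\inr g(x)e^{2x(z+\overline w)-2x^2}\,dx.$$
The inner integral over $x$ is the crux. Completing the square as $2x(z+\overline w)-2x^2=-2\left(x-\tfrac{z+\overline w}{2}\right)^2+\tfrac{(z+\overline w)^2}{2}$ and substituting $u=\sqrt2\,(x-\tfrac{z+\overline w}{2})$ splits the two half-line integrals (coming from $h(x)$ and $h(-x)$) into tails of a Gaussian. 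The main obstacle is the bookkeeping here: since $z+\overline w$ is complex, the resulting ``error-function'' pieces must be expressed through the entire function $\int_0^{\zeta}e^{-u^2}\,du$, and one must verify that the two tails recombine into exactly $A_\phi\big((z+\overline w)/\sqrt2\big)$. Using $\int_0^{\infty}e^{-u^2}\,du=\sqrt\pi/2$ and the oddness of $\int_0^{\zeta}e^{-u^2}\,du$, the bracketed expression collapses to $\tfrac{\sqrt\pi}{2}(e^{-i\phi}+e^{i\phi})+(e^{-i\phi}-e^{i\phi})\int_0^{(z+\overline w)/\sqrt2}e^{-u^2}\,du$, which is precisely the defining expression for $A_\phi$.

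Once the inner integral is evaluated, I would absorb the prefactors: the exponent $-z^2/2-\overline w^2/2+(z+\overline w)^2/2$ collapses to $z\overline w$, and $c^2/\sqrt2=1/\sqrt\pi$, so that
$$BM_gB^{-1}f(z)=\frac{1}{\sqrt\pi}\inc f(w)e^{z\overline w}A_\phi\!\left(\frac{z+\overline w}{\sqrt2}\right)d\lambda(w).$$
Finally I would conjugate by the two rotations from the first paragraph: replace the argument $z$ by $e^{i\alpha}z$ (from the left factor) and apply the middle operator to $f(e^{-i\alpha}\,\cdot\,)$ (from the right factor). Then I invoke the rotation invariance of the Gaussian measure $d\lambda$ under the change of variable $w\mapsto e^{i\alpha}w$; this relabeling turns $e^{i\alpha}z\,\overline w$ back into $z\overline w$, restores the argument of $f$ to $f(w)$, and converts the argument of $A_\phi$ into $(e^{i\alpha}z+e^{-i\alpha}\overline w)/\sqrt2$, yielding the stated formula. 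Beyond the inner-integral computation, the only points requiring care are the Fubini justification (handled, as in Theorem~\ref{3}, by reducing to polynomials) and the rotation-invariant substitution in this last step.
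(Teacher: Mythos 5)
Your proposal is correct, and it reorganizes the argument in a way that differs from the paper's proof in a worthwhile respect. The paper does not isolate the operator $BM_gB^{-1}$: instead it reuses the intermediate identity \eqref{eq1} from the proof of Theorem~\ref{3}, so that the composition $BM_g\Fa B^{-1}$ is computed in a single Fubini calculation with the angle $\alpha$ carried through every exponent, and only one rotation (the left factor $B\Fa^{-1}B^{-1}\colon f(z)\mapsto f(e^{i\alpha}z)$) is applied at the end. You instead factor $T=\bigl(B\Fa^{-1}B^{-1}\bigr)\bigl(BM_gB^{-1}\bigr)\bigl(B\Fa B^{-1}\bigr)$, compute the $\alpha$-free middle operator from scratch, and recover the general case by conjugating with the two rotations and invoking rotation invariance of $d\lambda$ under $w\mapsto e^{i\alpha}w$. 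This is exactly the mechanism the paper itself uses later, in Section 5, to prove $S^\alpha_\varphi=U_{-\alpha}S_\varphi U_\alpha$; your route makes that structure visible already at the level of Theorem~\ref{7} and buys modularity (the rotation bookkeeping is done once, by a change of variables, rather than threaded through every Gaussian exponent), at the cost of redoing a Fubini computation that the paper gets for free from \eqref{eq1}. The two treatments of the inner integral are also slightly different in form though equivalent in substance: the paper defines $J(z)=\inr e^{-(x+z)^2}\bigl(e^{-i\phi}h(x)+e^{i\phi}h(-x)\bigr)\,dx$, computes $J'(z)=(e^{i\phi}-e^{-i\phi})e^{-z^2}$ and $J(0)$, and concludes $J(z)=A_\phi(-z)$; you split the two half-line integrals into Gaussian tails after the complex substitution $u=\sqrt2\,\bigl(x-\tfrac{z+\overline w}{2}\bigr)$ and recombine them, correctly, into $A_\phi\bigl((z+\overline w)/\sqrt2\bigr)$. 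The one point you flag but should make fully explicit is the justification of that substitution for complex shift: the identity $\int_0^{+\infty}e^{-2(x-m)^2}\,dx=\tfrac1{\sqrt2}\bigl(\tfrac{\sqrt\pi}2+\int_0^{\sqrt2\,m}e^{-u^2}\,du\bigr)$ holds for real $m$ by an ordinary change of variables, and both sides are entire in $m$, so it persists for all $m\in\C$ by the identity theorem (this is precisely the Lemma~\ref{2}-style argument the paper packages into the computation of $J'$). With that one line added, your proof is complete; all remaining steps (the Fubini reduction to polynomials, the collapse of the exponent $-z^2/2-\overline w^2/2+(z+\overline w)^2/2$ to $z\overline w$, the constant $c^2/\sqrt2=1/\sqrt\pi$, and the final rotation substitution) check out.
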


\begin{proof}
For $c=(2/\pi)^{1/4}$ again and $f\in F^2$ (we may start out with a polynomial
in order to justify the use of Fubini's theorem), it follows from \eqref{eq1} that
\begin{align*}
&B\left[\left(e^{-i\phi}h(x)+e^{i\phi}h(-x)\right)\Fa\left(B^{-1}f\right)(x)\right]\\
    &=c^2\inr e^{2xz-\frac{z^2}{2}-2x^2}\left(e^{-i\phi}h(x)+e^{i\phi}h(-x)\right)\,dx\\
    &\qquad\inc f(w)e^{-\frac{(e^{-i\alpha}\overline{w})^2}{2}+2e^{-i\alpha}x\overline{w}}\,d\lambda(w)\\
    &=c^2\!\!\inc f(w)e^{e^{-i\alpha} z\overline{w}}d\lambda(w)\!\!\inr e^{-2
    \left(x-\frac{z+e^{-i\alpha}\overline{w}}{2}\right)^2}\left(e^{-i\phi}h(x)+e^{i\phi}h(-x)\right)dx.
\end{align*}
By Theorem\ref{3}, we have
$$B\Fa^{-1}B^{-1}f(z)=f(e^{i\alpha}z).$$
Combining this with the integral formula above, we obtain
\begin{align*}
&Tf(z)\\
&=B\Fa^{-1}B^{-1} B\left[\left(e^{-i\phi}h(x)+e^{i\phi}h(-x)\right)\Fa\left(B^{-1}f\right)(x)\right]\\
    &=c^2\!\!\inc f(w)e^{z\overline{w}}\,d\lambda(w)\!\!\inr e^{-2
    \left(x-\frac{e^{i\alpha}z+e^{-i\alpha}\overline{w}}{2}\right)^2}\left(e^{-i\phi}h(x)+e^{i\phi}h(-x)\right)\,dx\\
    &=\frac{1}{\sqrt{\pi}}\!\inc f(w)e^{z\overline{w}}\,d\lambda(w)\!\!\inr e^{-
    \left(x-\frac{e^{i\alpha} z+e^{-i\alpha}\overline{w}}{\sqrt{2}}\right)^2}\left(e^{-i\phi}h(x)+e^{i\phi}h(-x)\right)\,dx.
\end{align*}
Consider the entire function
$$J(z)=\inr e^{-\left(x+z\right)^2}\left(e^{-i\phi}h(x)+e^{i\phi}h(-x)\right)dx.$$
By the same argument used in the proof of Lemma~\ref{2}, we have
\begin{align*}
    J'(z)&=\frac{d}{dz}\left[e^{-i\phi}\int_0^{+\infty} e^{-\left(x+z\right)^2}\,dx
    +e^{i\phi}\int_{-\infty}^0 e^{-\left(x+z\right)^2}\,dx\right]\\
    &=-2e^{-i\phi}\int_0^{+\infty}\left(x+z\right)e^{-\left(x+z\right)^2}\,dx
    -2e^{i\phi}\int_{-\infty}^0\left(x+z\right)e^{-\left(x+z\right)^2}\,dx\\
    &=e^{-i\phi}e^{-\left(x+z\right)^2}\bigg|_{0}^{+\infty}+e^{i\phi}e^{-\left(x+z\right)^2}\bigg|_{-\infty}^{0}\\
    &=-e^{-i\phi}e^{-z^2}+e^{i\phi}e^{-z^2},
\end{align*}
and
$$J(0)=e^{-i\phi}\int_0^{+\infty} e^{-x^2}\,dx+e^{i\phi}\int_{-\infty}^0 e^{-x^2}\,dx
=\frac{\sqrt{\pi}}{2}\left(e^{-i\phi}+e^{i\phi}\right).$$
Therefore,
$$J(z)=\int_0^z \left(-e^{-i\phi}+e^{i\phi}\right)e^{-u^2}du+\frac{\sqrt{\pi}}{2}\left(e^{-i\phi}+e^{i\phi}\right)=A_\phi(-z).$$
Thus
\begin{align*}
Tf(z)&=\frac{1}{\sqrt{\pi}}\inc f(w)e^{z\overline{w}}J\left(-\frac{e^{i\alpha}z
+e^{-i\alpha}\overline{w}}{\sqrt{2}}\right)\,d\lambda(w)\\
&=\frac{1}{\sqrt{\pi}}\inc f(w)e^{z\overline{w}}A_\phi\left(\frac{e^{i\alpha}z
+e^{-i\alpha}\overline{w}}{\sqrt{2}}\right)\,d\lambda(w).
\end{align*}
This proves the desired result.
\end{proof}

The operator $T$ above appears to be a very interesting integral operator on the Fock space, although
we are unable to verify directly that $T$ is bounded on $F^2$.  A natural problem here is to study the
spectral properties of the integral operator $T$ above on $F^2$, or equivalently, the fractional Hilbert
transform as an operator on $L^2(\R)$. Not much seems to be known,
which is in sharp contrast to the case of the fractional Fourier transform.

\begin{corollary}\label{8}
For any real $\alpha$ and $\phi$ we have
$$H^\alpha_\phi=(\cos\phi)I+(\sin\phi)H^\alpha_{\pi/2}$$
and
$$H^{\pi/2}_\phi=(\cos\phi)I+(\sin\phi)H,$$
where $I$ is the identity operator.
\end{corollary}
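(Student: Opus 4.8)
The plan is to reduce both identities to a single algebraic decomposition of the Fourier multiplier appearing in the definition of $H^\alpha_\phi$. Writing $m_\phi(x)=e^{-i\phi}h(x)+e^{i\phi}h(-x)$ for this multiplier, I would first observe that $h(x)+h(-x)=1$ for every $x\neq0$, so that $m_\phi$ splits into its real and imaginary parts as
$$m_\phi(x)=\cos\phi\,\bigl(h(x)+h(-x)\bigr)+\sin\phi\,\bigl(-i\,h(x)+i\,h(-x)\bigr)=\cos\phi+\sin\phi\,m_{\pi/2}(x),$$
where $m_{\pi/2}(x)=e^{-i\pi/2}h(x)+e^{i\pi/2}h(-x)=-i\,\textrm{sgn}(x)$ is exactly the multiplier occurring in the definition of $H^\alpha_{\pi/2}$. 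The coefficient check is immediate: the coefficient of $h(x)$ is $\cos\phi-i\sin\phi=e^{-i\phi}$ and that of $h(-x)$ is $\cos\phi+i\sin\phi=e^{i\phi}$, recovering $m_\phi$.

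Next I would substitute this decomposition into the defining formula $H^\alpha_\phi f=\Fa^{-1}[m_\phi\,\Fa f]$ and invoke the linearity of $\Fa$ and $\Fa^{-1}$. The constant term $\cos\phi$ pulls through both transforms, and since $\Fa^{-1}\Fa=I$ by Corollary~\ref{4}, it contributes $(\cos\phi)I$; the remaining term is precisely $(\sin\phi)\,\Fa^{-1}[m_{\pi/2}\,\Fa f]=(\sin\phi)H^\alpha_{\pi/2}f$. This establishes the first identity. The second identity then follows by specializing to $\alpha=\pi/2$ and recalling that $H^{\pi/2}_{\pi/2}$ is, by construction, the classical Hilbert transform $H$.

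There is essentially no serious obstacle here; the one point deserving a word of care is that the splitting of $m_\phi$ holds only off the single point $x=0$. Since this is a null set it does not affect the resulting $L^2(\R)$ identities, and all the manipulations are legitimate because $\Fa$ is unitary, hence bounded, and the multipliers $m_\phi$ and $m_{\pi/2}$ are bounded functions, so multiplication by them is a bounded operator on $L^2(\R)$.
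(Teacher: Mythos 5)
Your proof is correct, but it runs along a genuinely different track from the paper's. You work entirely on the $L^2(\R)$ side: you split the Fourier multiplier $m_\phi(x)=e^{-i\phi}h(x)+e^{i\phi}h(-x)$ as $\cos\phi+\sin\phi\,m_{\pi/2}(x)$ off the null set $\{0\}$, then push the decomposition through the defining formula $H^\alpha_\phi f=\Fa^{-1}[m_\phi\,\Fa f]$ using linearity and $\Fa^{-1}\Fa=I$ (Corollary~\ref{4}). The paper instead proves the identity on the Fock-space side: having computed in Theorem~\ref{7} that $BH^\alpha_\phi B^{-1}$ is the integral operator with kernel $e^{z\overline w}A_\phi\bigl((e^{i\alpha}z+e^{-i\alpha}\overline w)/\sqrt2\bigr)/\sqrt\pi$, it uses the decomposition $A_\phi(z)=\sqrt\pi\,\cos\phi+(\sin\phi)A_{\pi/2}(z)$ and identifies the constant term with $(\cos\phi)I$ via the reproducing formula $f(z)=\inc f(w)e^{z\overline w}\,d\lambda(w)$. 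The two decompositions are of course the same fact in two guises --- your splitting of $m_\phi$ is exactly what produces the splitting of $A_\phi$ under the computation of Theorem~\ref{7} --- but your route is more elementary and self-contained: it needs only the definition of $H^\alpha_\phi$ and the unitarity of $\Fa$, bypassing the Bargmann transform and Theorem~\ref{7} altogether, and it makes transparent why the result is independent of $\alpha$. What the paper's route buys is coherence with its program: it exhibits the decomposition at the level of the Fock-space kernel $A_\phi$, which is the object the surrounding sections are actually studying. Your attention to the two small analytic points --- the splitting failing only at $x=0$, and boundedness of the multiplication operators justifying the manipulations --- is exactly right and costs nothing.
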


\begin{proof}
The desired results follow easily from the decomposition
$$A_\phi(z)=\sqrt\pi\,\cos\phi+(\sin\phi)A_{\pi/2}(z)$$
and the reproducing formula for functions in the Fock space.
\end{proof}

As a generalization of the Hilbert transform, the second formula in the corollary above 
was actually used as one of the definitions for the fractional Hilbert transform in \cite{LMZ}.

Corollary \ref{8} also shows clearly how $H^\alpha_\phi$ depends on $\phi$. It appears that introducing an
extra angle $\phi$ (other than $\pi/2$) does not really produce something new. In particular, for any
$\phi\in(0,\pi/2)$ the operator $H^{\pi/2}_\phi$ is a convex combination of the identity operator $I$ and the
ordinary Hilbert transform $H$.

As a consequence of Theorem~\ref{7} we also obtain the following result which can be found
in \cite[Theorem 1]{Zhu2}.

\begin{corollary}\label{9}
Suppose $A(z)$ is the anti-derivative of $e^{z^2}$ with $A(0)=0$ and $S=BH B^{-1}$. Then
$$Sf(z)=\frac{2}{\sqrt{\pi}}\inc f(w)e^{z\overline{w}}A\left(\frac{z-\overline{w}}{\sqrt{2}}\right)\,d\lambda(w)$$
for all $f\in F^2$ and $z\in\C$.
\end{corollary}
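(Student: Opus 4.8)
The plan is to specialize Theorem~\ref{7} to the case of the classical Hilbert transform and then reconcile the two different antiderivatives that appear in the statements. Since $H=H^{\pi/2}_{\pi/2}$, I would begin by setting $\alpha=\phi=\pi/2$ in the formula furnished by Theorem~\ref{7}. With these values we have $e^{i\alpha}=i$ and $e^{-i\alpha}=-i$, so the argument of $A_\phi$ collapses to $\frac{iz-i\overline{w}}{\sqrt{2}}=\frac{i(z-\overline{w})}{\sqrt{2}}$, and Theorem~\ref{7} gives
$$Sf(z)=\frac{1}{\sqrt{\pi}}\inc f(w)e^{z\overline{w}}A_{\pi/2}\!\left(\frac{i(z-\overline{w})}{\sqrt{2}}\right)d\lambda(w).$$

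The heart of the matter is to express $A_{\pi/2}(i\eta)$ in terms of $A(\eta)$, where $\eta=(z-\overline{w})/\sqrt{2}$. From the defining formula for $A_\phi$, since $\cos(\pi/2)=0$ and $\sin(\pi/2)=1$, we have $A_{\pi/2}(\zeta)=-2i\int_0^\zeta e^{-u^2}\,du$. I would then rotate the contour via the substitution $u=iv$: because $e^{-u^2}$ is entire, the integral is path-independent, and along the straight segment from $0$ to $i\eta$ one obtains $\int_0^{i\eta}e^{-u^2}\,du=i\int_0^\eta e^{v^2}\,dv=iA(\eta)$, where the last equality uses that $A$ is the antiderivative of $e^{z^2}$ normalized by $A(0)=0$. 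Hence $A_{\pi/2}(i\eta)=-2i\cdot iA(\eta)=2A(\eta)$.

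Substituting this identity back into the specialized formula and absorbing the factor $2$ into the constant $\frac{1}{\sqrt{\pi}}$ yields exactly
$$Sf(z)=\frac{2}{\sqrt{\pi}}\inc f(w)e^{z\overline{w}}A\!\left(\frac{z-\overline{w}}{\sqrt{2}}\right)d\lambda(w),$$
which is the claim. The only step that requires any care is the contour rotation that converts the Gaussian $e^{-u^2}$ into $e^{v^2}$ and supplies the crucial factor of $i$; everything else is bookkeeping of the constants and of the two exponential factors $e^{\pm i\alpha}$. For this reason I do not anticipate a genuine obstacle: the corollary is essentially a direct specialization of Theorem~\ref{7}, and the identification $A_{\pi/2}(i\eta)=2A(\eta)$ is the single substantive observation.
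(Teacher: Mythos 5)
Your proposal is correct and follows essentially the same route as the paper: specialize Theorem~\ref{7} at $\alpha=\phi=\pi/2$ and convert $A_{\pi/2}$ into $A$ via the substitution $u=iv$, which is exactly the paper's identity $A_{\pi/2}(z)=2A(-iz)$ evaluated at $z=i\eta$. Your contour-rotation remark is harmless but unnecessary, since $u=iv$ simply parametrizes the straight segment from $0$ to $i\eta$.
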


\begin{proof}
Since $H=H^\alpha_{\phi}$ with $\phi=\frac{\pi}{2}$ and $\alpha=\frac{\pi}{2}$, and
$$A_{\pi/2}(z)=-2i\int_0^ze^{-u^2}\,du=2\int_0^{-iz} e^{u^2}\,du=2A(-iz),$$
it follows from Theorem~\ref{7} that
\begin{align*}
Sf(z)&=\frac{1}{\sqrt{\pi}}\inc f(w)e^{z\overline{w}}A_{\pi/2}\left(\frac{iz-i\overline{w}}{\sqrt{2}}\right)\,d\lambda(w)\\
&=\frac{2}{\sqrt{\pi}}\inc f(w)e^{z\overline{w}}A\left(\frac{z-\overline{w}}{\sqrt{2}}\right)\,d\lambda(w),
\end{align*}
as desired.
\end{proof}

\section{Singular integral operators on $F^2$}

Motivated by Corollary~\ref{9}, the second author in \cite{Zhu2} proposed to study the boundedness of more
general ``singular integral operators" on $F^2$ of the form
$$
S_{\varphi}f(z)=\inc f(w)e^{z\overline{w}}\varphi\left(z-\overline{w}\right)d\lambda(w),
$$
where $\varphi$ is any function in $F^2$. In view of Theorem~\ref{7}, it is also natural for us to consider
integral operators on $F^2$ of the form
$$S_{\varphi}^{\alpha}f(z)=\inc f(w)e^{z\overline{w}}\varphi\left(e^{i\alpha}z
-e^{-i\alpha}\overline{w}\right)\,d\lambda(w),$$
where $\alpha$ is a real parameter. Note that we have made an adjustment here so that $S^\alpha_\varphi$
becomes $S_\varphi$ when $\alpha=0$.

The most fundamental problem in this direction is to characterize those $\varphi\in F^2$ such that
$S_{\varphi}$ is bounded on $F^2$. The extra parameter $\alpha$ in $S_{\varphi}^{\alpha}$ does not
yield additional difficulty, because $S_\varphi^\alpha$ is unitarily equivalent to $S_\varphi$ via the Fock
space version of the fractional Fourier transform $\Fa$. In fact, if we write $U_\alpha=B\Fa B^{-1}$,
so that $U_\alpha f(z)=f(e^{-i\alpha}z)$ for $f\in F^2$, then $U_\alpha$ is a unitary operator on $F^2$ and
by the simple change of variables $w=e^{-i\alpha}u$,
$$S^\alpha_\varphi f(z)=\inc f(e^{-i\alpha}u)e^{ze^{i\alpha}\overline u}\varphi(e^{i\alpha}z-\overline u)
\,d\lambda(u)=U_{-\alpha}S_\varphi U_\alpha f(z).$$
Thus $S^\alpha_\varphi$ and $S_\varphi$ are unitarily equivalent as operators on $F^2$.

Several non-trivial examples of $S_\varphi$ were considered in \cite{Zhu2}. It is still an open question to
characterize the boundedness of $S_\varphi$ in terms of properties of $\varphi$. In this section we
will construct additional examples of bounded operators $S_\varphi$ on $F^2$. Our construction is based
on the wavelet transform as an operator on $L^2(\R)$.

Fix $g\in L^2(\R)$ and $s\in \R$ with $s\neq0$, the continuous wavelet transform of $f$ with respect to
the ``wavelet'' $g$ is defined to be
$$W_g(f)(x)=\frac{1}{\sqrt{|s|\pi}}\inr f(t)g(s^{-1}(t-x))\,dt, \qquad f\in L^2(\R).$$
It is well known that the Fourier transform of $W_g(f)$ is given by
$$|s|^{1/2}\overline{\mathcal{F}(g)(sx)}\mathcal{F}\left(f\right)(x).$$ See \cite{G} for more information about
the wavelet transform. So the corresponding operator on the Fock space is defined by
$$BW_gB^{-1}f(z)=B\mathcal{F}^{-1}\left[\overline{D_{s}\mathcal{F}(g)}\mathcal{F}\left(B^{-1}f\right)\right](z),
\qquad f\in F^2,$$
where $D_s: L^2(\R)\rightarrow L^2(\R)$ is the dilation operator
defined by $$D_sg(x)=|s|^{1/2}g(sx).$$

\begin{lemma}\label{10}
The operator
$$T=BW_g B^{-1}:F^2\rightarrow F^2$$
is given by
$$Tf(z)=\sqrt{\frac{|s|}{\pi}}\inc f(w)e^{z\overline{w}}\,d\lambda(w)
\inr g(t)e^{-\frac{s^2}{2}t^2-st(z-\overline{w})}\,dt$$
for all $f\in F^2$.
\end{lemma}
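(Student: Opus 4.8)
The plan is to write all three operators making up $T=BW_gB^{-1}$ as integrals and collapse them, exactly in the spirit of the proofs of Theorem~\ref{3} and Theorem~\ref{7}. First I would restrict to $f$ a polynomial, so that $B^{-1}f$ is a Hermite function times a Gaussian and hence Schwartz-class; this legitimizes every interchange of integration (the remaining factor $g\in L^2(\R)$ is paired only with Gaussians, so Cauchy--Schwarz guarantees integrability), and the general case follows from the density of polynomials in $F^2$. Stacking the integral formulas for $B^{-1}$ (over $w\in\C$), for $W_g$ (over $t\in\R$), and for $B$ (over $x\in\R$) produces a single iterated integral whose integrand is $f(w)\,g(s^{-1}(t-x))$ times a product of Gaussians in $x$ and $t$.

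The next step is the change of variables $u=s^{-1}(t-x)$ in the $t$-integral, which replaces $g(s^{-1}(t-x))$ by $g(u)$ and contributes a Jacobian factor $|s|$ (the orientation reversal when $s<0$ is absorbed into the absolute value). Applying Fubini to integrate in $x$ first, the exponent is quadratic in $x$; completing the square and invoking Lemma~\ref{2} with $a=2$, $b=0$ evaluates that inner integral as $\sqrt{\pi/2}=1/c^2$ times $e^{\beta^2/2}$, where $\beta=z+\overline{w}-su$. Expanding $\beta^2/2$ and combining with the surviving exponentials, the factor $e^{-\overline{w}^2/2}$ coming from $B^{-1}$ cancels the $+\overline{w}^2/2$ produced by the square, and the remaining exponent collapses to $z\overline{w}-\frac{s^2}{2}u^2-su(z-\overline{w})$. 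Collecting the constants $\frac{c^2}{\sqrt{|s|\pi}}\cdot|s|\cdot\frac{1}{c^2}=\sqrt{|s|/\pi}$ and renaming $u$ back to $t$ yields precisely the claimed formula. Alternatively, one could route through the Fourier-multiplier description recorded just before the lemma, using Theorem~\ref{3} (so that $B\F^{-1}B^{-1}$ acts as $f(z)\mapsto f(iz)$) together with \eqref{eq1}; after inserting the integral representation of $\overline{\F(g)(sx)}$, the $x$-integral is dispatched by the same Lemma~\ref{2} computation and one is led to the analogous Gaussian integral against $g$.

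I expect the only genuine obstacle to be bookkeeping rather than ideas: keeping the quadratic exponent straight through the completion of the square so that the $\overline{w}^2$ terms cancel, and tracking the overall constant through the application of Lemma~\ref{2} and the Jacobian. The one subtlety worth flagging is the sign of $s$ in the substitution, which affects only orientation and is cleanly absorbed by the $|s|$ in the prefactor; everything else is the Gaussian-integral machinery already used repeatedly above.
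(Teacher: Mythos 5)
Your argument is correct, and your primary route is genuinely different from the paper's. The paper never touches the time-domain kernel $g(s^{-1}(t-x))$: it takes the Fourier-multiplier description displayed just before the lemma as its starting point, substitutes formula \eqref{eq1} with $\alpha=\pi/2$ for $\F(B^{-1}f)$, inserts $\inr g(t)e^{2ixst}\,dt$ for the multiplier $\sqrt{\pi}\,\overline{D_s\F(g)}(x)$, kills the $x$-integral with Lemma~\ref{2}, and finishes with Theorem~\ref{3} in the form $B\F^{-1}B^{-1}f(z)=f(iz)$ --- i.e.\ exactly the alternative you sketch in passing. Your main route instead stacks the three kernels directly, substitutes $u=s^{-1}(t-x)$, and Gaussian-integrates in $x$; I checked the bookkeeping and it closes: with $\beta=z+\overline{w}-su$ the exponent is $-2x^2+2x\beta-\tfrac{z^2}{2}-\tfrac{\overline{w}^2}{2}+2su\overline{w}-s^2u^2$, Lemma~\ref{2} contributes $e^{\beta^2/2}/c^2$, the $z^2/2$ and $\overline{w}^2/2$ terms cancel, and the remainder is $z\overline{w}-\tfrac{s^2}{2}u^2-su(z-\overline{w})$ with constant $\tfrac{c^2}{\sqrt{|s|\pi}}\cdot|s|\cdot\tfrac1{c^2}=\sqrt{|s|/\pi}$, as you say. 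What each approach buys: the paper's route reuses already-established machinery (Theorem~\ref{3} and \eqref{eq1}) and matches how the operator $BW_gB^{-1}$ is actually \emph{defined} in the text (via the multiplier identity), while your route is self-contained, has one fewer appeal to prior results, and quietly avoids a conjugation wrinkle the paper glosses over: $\overline{\F(g)(sx)}$ has kernel representation $\inr \overline{g(t)}e^{2ixst}\,dt$, so the paper's computation as written is literally valid only for real $g$ (or after replacing $g$ by $\overline g$), whereas your time-domain computation produces $g$ unconjugated, exactly as in the lemma's statement. The one caveat for splicing your proof into the paper is therefore definitional rather than computational: since the paper defines $T$ through the multiplier display, you should add the one-line observation that this definition agrees with the time-domain integral you start from (the Plancherel computation preceding the lemma, with the conjugation convention just noted). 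Your justification of Fubini --- polynomials, Schwartz decay of Hermite functions times Gaussians, Cauchy--Schwarz pairing $g\in L^2(\R)$ against Gaussians, then density --- is the same device the paper uses and is adequate.
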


\begin{proof}
For $c=(2/\pi)^{1/4}$ and any polynomial $f$ in $F^2$, it follows from \eqref{eq1} that
$$\F\left(B^{-1}f\right)(x)=ce^{-x^2}\inc f(w)e^{\frac{\overline{w}^2}{2}-2ix\overline{w}}\,d\lambda(w).$$
Therefore,
\begin{align*}
&B\left[\overline{D_{s}\F(g)}\F\left(B^{-1}f\right)\right](z)\\
&=\frac{c^2\sqrt{|s|}}{\sqrt{\pi}}\inr e^{2xz-\frac{z^2}{2}-2x^2}\,dx
\inc f(w)e^{\frac{\overline{w}^2}{2}-2ix\overline{w}}d\lambda(w)\inr g(t)e^{2ixst}\,dt\\
&=\frac{c^2\sqrt{|s|}}{\sqrt{\pi}}\inc f(w)d\lambda(w)\inr g(t)\,dt
\inr e^{2x(z-i\overline{w}+ist)-2x^2-\frac{z^2}{2}+\frac{\overline{w}^2}{2}}\,dx\\
&=\frac{c^2\sqrt{|s|}}{\sqrt{\pi}}\!\!\inc\!f(w)e^{-iz\overline{w}}\,d\lambda(w)
\!\!\inr\!g(t)e^{-\frac{s^2}{2}t^2+ist(z-i\overline{w})}dt\!\!\inr\!e^{-2\left(x-\frac{z-i\overline{w}+ist}{2}\right)^2}\!dx\\
&=\frac{\sqrt{|s|}}{\sqrt{\pi}}\inc f(w)e^{-iz\overline{w}}\,d\lambda(w)
\inr g(t)e^{-\frac{s^2}{2}t^2+ist(z-i\overline{w})}\,dt.
\end{align*}
Recall from Theorem~\ref{3} that
$$B\F^{-1}B^{-1}f(z)=f(iz).$$
Thus we have
\begin{align*}
T(f)(z)&=B\F^{-1}B^{-1} B\left[\overline{D_{s}\F(g)}\F\left(B^{-1}f\right)\right](z)\\
&=\frac{\sqrt{|s|}}{\sqrt{\pi}}\inc f(w)e^{z\overline{w}}\,d\lambda(w)
\inr g(t)e^{-\frac{s^2}{2}t^2-st(z-\overline{w})}\,dt.
\end{align*}
This proves the desired result.
\end{proof}

As a consequence of Lemma~\ref{10} we obtain a class of functions $\varphi\in F^2$ such that the associated
singular integral operator $S_\varphi$ is bounded on $F^2$.

\begin{corollary}\label{11}
For any $g\in L^1(\R)\cap L^2(\R)$ and $s\neq0$ the function
$$\varphi(z)=\sqrt{\frac{|s|}{\pi}}\inr g(t)e^{-\frac{s^2}{2}t^2-t sz}\,dt$$
belongs to the Fock space $F^2$ and the corresponding operator $S_{\varphi}$ is bounded on $F^2$.
\end{corollary}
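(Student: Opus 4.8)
The plan is to recognize the operator of Lemma~\ref{10} as precisely $S_\varphi$ for the stated symbol, and then to deduce everything from the unitary equivalence $T=BW_gB^{-1}$ together with the boundedness of the wavelet transform $W_g$ on $L^2(\R)$. Writing out $S_\varphi$ with
$$\varphi(z-\overline w)=\sqrt{\frac{|s|}{\pi}}\inr g(t)e^{-\frac{s^2}{2}t^2-ts(z-\overline w)}\,dt$$
reproduces exactly the kernel in Lemma~\ref{10}, so that $T=S_\varphi$ as operators on $F^2$. Hence it suffices to prove two things: that $W_g$ is bounded on $L^2(\R)$, whence $S_\varphi=BW_gB^{-1}$ is bounded on $F^2$ because $B$ is unitary; and that the symbol $\varphi$ actually lies in $F^2$.

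For the boundedness of $W_g$ I would pass to the Fourier side. Recall from the discussion preceding Lemma~\ref{10} that $\F(W_gf)=\overline{D_s\F(g)}\,\F(f)$, so that $W_g=\F^{-1}M_m\F$, where $M_m$ denotes multiplication by $m=\overline{D_s\F(g)}$. Since $g\in L^1(\R)$, its Fourier transform is bounded, with $\|\F(g)\|_\infty\le\pi^{-1/2}\|g\|_1$; as $D_s$ only rescales, $m\in L^\infty(\R)$ with $\|m\|_\infty=|s|^{1/2}\|\F(g)\|_\infty$. Multiplication by an $L^\infty$ function is bounded on $L^2(\R)$ and $\F$ is unitary, so $W_g$ is bounded, with $\|W_g\|\le|s|^{1/2}\pi^{-1/2}\|g\|_1$. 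Consequently $T=BW_gB^{-1}$ is a bounded operator on $F^2$.

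It remains to check $\varphi\in F^2$, and the key observation is that $\varphi$ is nothing but the image under $T$ of the first basis vector $e_0\equiv1$. Applying the formula of Lemma~\ref{10} to $f=e_0$, interchanging the order of integration, and evaluating the inner integral $\inc e^{(z+st)\overline w}\,d\lambda(w)=1$ by the reproducing formula for $F^2$, I obtain
$$Te_0(z)=\sqrt{\frac{|s|}{\pi}}\inr g(t)e^{-\frac{s^2}{2}t^2-stz}\,dt=\varphi(z).$$
Equivalently, since $B^{-1}e_0=h_0$ by Lemma~\ref{1}, one has $\varphi=BW_gh_0$ with $W_gh_0\in L^2(\R)$, so $\varphi$ lies in the range of $B$. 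In either form, $T$ maps $F^2$ into $F^2$ and $e_0\in F^2$, so $\varphi=Te_0\in F^2$. This both legitimizes the definition of $S_\varphi$ and, with the previous paragraph, shows it is bounded.

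The step I expect to be the main obstacle is the boundedness of $W_g$; a direct Fock-norm estimate of $\varphi$ looks unpromising, since the only elementary majorization of the defining integral, $|\varphi(z)|\le\sqrt{|s|/\pi}\,\|g\|_1\,e^{(\mathrm{Re}\,z)^2/2}$, already fails to be square-integrable against $d\lambda$ (the $x$-integral of $e^{(\mathrm{Re}\,z)^2-|z|^2}=e^{-y^2}$ diverges). The structural route—realizing $W_g$ as an $L^\infty$ Fourier multiplier, which is exactly where the hypothesis $g\in L^1(\R)$ enters, and then reading off $\varphi=Te_0$—is what makes both assertions fall out cleanly.
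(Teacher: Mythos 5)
Your proposal is correct, and its overall architecture coincides with the paper's: both identify the kernel of Lemma~\ref{10} with $S_\varphi$ and transfer boundedness from $W_g$ through the unitary $B$. The differences are in the two supporting steps. For the boundedness of $W_g$ you use the Fourier-multiplier realization $W_g=\F^{-1}M_m\F$ with $m=\overline{D_s\F(g)}\in L^\infty$; the paper instead cites Young's inequality for the convolution $f\mapsto f*\tilde g$ with $\tilde g\in L^1$. These are the same fact viewed on opposite sides of $\F$ (the paper itself records the identity $\F(W_gf)=\overline{D_s\F(g)}\,\F(f)$ just before Lemma~\ref{10}), so no real divergence there. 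The genuinely different step is the membership $\varphi\in F^2$: you obtain it softly via $\varphi=Te_0=BW_gh_0$ with $W_gh_0\in L^2(\R)$, which is shorter and conceptually tied to the operator, but it leans on $g\in L^1$ (through $L^1*L^2\subset L^2$) and, in the $Te_0$ computation, on a Fubini interchange that you should note is legitimate because $e^{-s^2t^2/2}\,e^{|z+st|^2/4}$ decays like $e^{-s^2t^2/4}$, making the double integral absolutely convergent. The paper instead computes directly: a change of variables exhibits $\varphi(z)=C_2\,e^{z^2/4}\,Bh(z/\sqrt2)$ with $h(x)=g(-\sqrt2\,x/s)\in L^2(\R)$, and the estimate $|e^{w^2}|e^{-|w|^2}=e^{-2(\mathrm{Im}\,w)^2}\le1$ gives $\|\varphi\|^2\le2|C_2|^2\|Bh\|^2$. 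This buys slightly more: membership $\varphi\in F^2$ holds for every $g\in L^2(\R)$, with no $L^1$ hypothesis. (Your side remark that a direct Fock-norm estimate is unpromising is true only for the crude majorization $|\varphi(z)|\le Ce^{(\mathrm{Re}\,z)^2/2}$; the paper's structured factorization through $Bh$ is exactly the direct estimate that does work.) Under the stated hypothesis $g\in L^1(\R)\cap L^2(\R)$, both routes are complete and correct.
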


\begin{proof}
By an obvious change of variables, there is a constant $C_1$ such that
$$\varphi(z)=C_1\inr g\left(-\frac{\sqrt2}s\,x\right)e^{-x^2+\sqrt2 xz}\,dx.$$
The function
$$h(x)=g\left(-\frac{\sqrt2}s\,x\right)$$
is still in $L^2(\R)$ and there is another constant $C_2$ such that
$$\varphi(z)=C_2e^{\frac14z^2}Bh(z/\sqrt2),$$
where $B$ is the Bargmann transform. Since the function $Bh$ is in $F^2$, we have
\begin{align*}
\inc|\varphi(z)|^2\,d\lambda(z)&=|C_2|^2\inc\left|e^{\frac14z^2}Bh\left(\frac z{\sqrt2}\right)\right|^2\,d\lambda(z)\\
&=2|C_2|^2\inc|Bh(w)|^2|e^{w^2}|e^{-|w|^2}\,d\lambda(w)\\
&\le2|C_2|^2\inc|Bh(w)|^2\,d\lambda(w)<\infty.
\end{align*}
This shows that the function $\varphi$ is in $F^2$.

By Lemma \ref{10}, the wavelet transform $W_g$ is unitarily equivalent to $S_{\varphi}$. Since $g\in L^1(\R)$, 
it is clear from Young's inequality for the convolution operator that $W_g$ is bounded on $L^2(\R)$. This proves
the desired result.
\end{proof}

We will consider two special cases of $\varphi\in F^2$ that arise from Corollary~\ref{11}.

First, for any nonnegative integer $n$, let $g_n$ be the monomial $x^n$ times the Gauss function $e^{-x^2}$, and
let $\varphi_n$ denote the corresponding function from Corollary~\ref{11}. We have $g_0(x)=e^{-x^2}$ and
$g_1(x)=xe^{-x^2}$. It follows that
$$\varphi_0(z)=\sqrt{\frac{|s|}{\pi}}\inr e^{-\frac{(s^2+2)}{2}t^2-t sz}\,dt
=\sqrt{\frac{2|s|}{s^2+2}}e^{\frac{s^2}{2(s^2+2)}z^2}$$
and
$$\varphi_1(z)=\sqrt{\frac{|s|}{\pi}}\inr te^{-\frac{(s^2+2)}{2}t^2-t sz}\,dt
=-\frac{s z}{s^2+2}\varphi_0(z).$$
For $g_n(x)=x^ne^{-x^2}$ with $n\geq 2$ we have
\begin{align*}
\varphi_n(z)&=\sqrt{\frac{|s|}{\pi}}\inr t^n e^{-\frac{(s^2+2)}{2}t^2-t sz}\,dt\\
&=\sqrt{\frac{|s|}{\pi}}\inr \left[\left(t+\frac{s z}{s^2+2}\right)t^{n-1}-\frac{s z}{s^2+2}t^{n-1}\right]
e^{-\frac{(s^2+2)}{2}t^2-t sz}\,dt\\
&=-\frac{s z}{s^2+2}\varphi_{n-1}(z)+\frac{n-1}{s^2+2}\varphi_{n-2}(z).
\end{align*}
Therefore, by induction on $n$, we obtain
$$\varphi_n(z)=\left(a_nz^n+\cdots+a_1z+a_0\right)e^{\frac{s^2}{2(s^2+2)}z^2},$$
where
$$a_n=\sqrt{\frac{2|s|}{s^2+2}}\frac{(-1)^ns^n}{\left(s^2+2\right)^n}\neq0.$$

Second, we consider $s=1$ and
$$g(x)=e^{-\frac{\varepsilon}{2} x^2+bx},\qquad\varepsilon>0,b\in\R.$$
It is easy to check that the corresponding function $\varphi$ in Corollary~\ref{11} is given by
$$\varphi(z)=\frac{1}{\sqrt{\pi}}\inr e^{-\frac{1+\varepsilon}{2}t^2-t(z-b)}\,dt
=\sqrt{\frac{2}{1+\varepsilon}}e^{\frac{1}{2(1+\varepsilon)}(z-b)^2}.$$
It follows that the operator $S_{\varphi}$ induced by $\varphi(z)=e^{a(z-b)^2}\in F^2$, where $0<a<1/2$ and
$b\in\R$, is bounded on $F^2$. Furthermore, the range for $a$ is best possible. The case $b=0$ was
proved in \cite{Zhu2}.

It is natural to wonder whether Corollary~\ref{11} might suggest a characterization for the boundedness
of $S_\varphi$ on $F^2$, namely, is it true that $S_\varphi$ is bounded on $F^2$ if and only if $\varphi$
came from a function $g\in L^1(\R)\cap L^2(\R)$ via the integral transform in Corollary~\ref{11}. Unfortunately, the
answer is negative. In fact, if we take $s=-1$ and $g(x)=\frac{1}{\sqrt{\pi}x}$, then $g$ is not in $L^1(\R)\cap L^2(\R)$,
$$W_g(f)(x)=\frac{1}{\pi}\inr \frac{f(t)}{x-t}\,dt=H(f)(x),$$
and the corresponding function $\varphi$ is given by
$$\varphi(z)=\frac{1}{\pi}\inr \frac{e^{-\frac{t^2}{2}+tz}}{t}\,dt=\frac{1}{\pi}\inr \frac{e^{-t^2+\sqrt{2}tz}}{t}\,dt,$$
where the integral above is a ``principle value" integral. We can rewrite this PV-integral in the form of an
ordinary integral as follows:
$$\varphi(z)=\frac{1}{\pi}\inr \frac{e^{-t^2}(e^{\sqrt{2}tz}-1)}{t}\,dt.$$
The singularity at $t=0$ and the singularity at infinity are both gone. Thus
we can differentiate inside the integral sign to get
$$\varphi'(z)=\frac{\sqrt{2}}{\pi}\inr e^{-t^2+\sqrt{2}tz}\,dt=
\frac{\sqrt{2}}{\pi}e^{\frac{z^2}{2}}\inr e^{-(t-z/\sqrt{2})^2}\,dt=\sqrt{\frac{2}{\pi}}e^{\frac{z^2}{2}}.$$
Recall that $A(z)$ is the anti-derivative of $e^{z^2}$ with $A(0)=0$. Since $\varphi(0)=0$, we must have
$$\varphi(z)=\frac{2}{\sqrt{\pi}}A\left(\frac{z}{\sqrt{2}}\right).$$
It follows from the Taylor expansion of $\varphi$, the standard orthonormal basis of $F^2$, and Stirling's
formula that $\varphi\in F^2$. Moreover, it follows from Lemma~\ref{10} that $BHB^{-1}=S_\varphi$.
Recall that the Hilbert transform $H$ is a unitary operator on $L^2(\R)$,
so $S_\varphi$ is also bounded on $F^2$. This gives an alternative proof of Corollary~\ref{9} and shows that
the functions $\varphi$ in Corollary~\ref{11} generated by $g\in L^1(\R)\cap L^2(\R)$ cannot characterize all bounded
operators $S_\varphi$ on $F^2$.

\end{document}